\documentclass[preprint]{imsart}

\pdfoutput=1
\RequirePackage[OT1]{fontenc}
\usepackage{amsthm, amsmath, natbib, amssymb, enumitem}
\RequirePackage[colorlinks,citecolor=blue,urlcolor=blue]{hyperref}

\usepackage{mathtools} 
\mathtoolsset{showonlyrefs}

\startlocaldefs
\numberwithin{equation}{section}
\theoremstyle{plain}
\newtheorem{theorem}{Theorem}[section]

\theoremstyle{remark}
\newtheorem{remark}{Remark}[section]

\newcommand{\rd}{\mathrm{d}}

\NewDocumentEnvironment{manual}{O{theorem}m}
 {%
  \addtocounter{theorem}{-1}%
  \renewcommand\thetheorem{#2}%
  \begin{#1}
 }
 {\end{#1}}

\endlocaldefs
\usepackage[margin=35truemm]{geometry}
\usepackage{booktabs}
\begin{document}
\begin{frontmatter}
\title{Minimax Simple Bayes Estimators of a Normal Variance}
\runtitle{Minimax Simple Bayes Estimators}

\begin{aug}
\author{\fnms{Yuzo} \snm{Maruyama}\thanksref{addr1,t1}\ead[label=e1]{maruyama@math.s.chiba-u.ac.jp}}

\runauthor{Y.~Maruyama}

\address[addr1]{Chiba University \printead{e1} 
}

\thankstext{t1}{supported by JSPS KAKENHI Grant Number 22K11933}
\end{aug}

\begin{abstract}
This paper is a follow-up to Maruyama and Strawderman (2006, Journal of Statistical Planning and Inference), which identified a new class of generalized Bayes estimators with a particularly simple form for estimating a normal variance under entropy loss. 
Although their previous work established the Bayesianity of these estimators, it did not provide a closed-form result for their minimaxity. 
In this paper, we revisit the problem and establish a definitive closed-form minimaxity result for this class of simple Bayes estimators.
\end{abstract}

\begin{keyword}[class=MSC]
\kwd[Primary ]{62C20}
\end{keyword}

\begin{keyword}
\kwd{Minimaxity}
\kwd{Bayes}
\kwd{Stein's paradox}
\kwd{Estimation of a variance}
\end{keyword}

\end{frontmatter}
\section{Introduction}
Let $ X $ and $ S $ be independent random variables,
 where $ X $ follows a
$p$-variate normal distribution $ \mathcal{N}_{p}(\theta,\sigma^{2}I_{p}) $ 
with unknown $ \theta $ and $ \sigma^2$ 
and $ S/\sigma^{2} $ follows a chi square distribution 
$ \chi_{n}^{2} $ with $ n $ degrees of freedom.
We consider the problem of estimating the variance
$ \sigma^{2} $ using an estimator $ \delta $ relative to 
the entropy loss function   
\begin{equation}\label{loss.variance}
 L(\delta,\sigma^{2})=\frac{\delta}{\sigma^{2}} - \log\frac{\delta}{\sigma^{2}}
-1.
\end{equation}
\cite{Stein-1964} showed that the best affine equivariant minimax estimator 
is $ \delta_{0}= S/n $ and
it can be improved by considering a class of scale equivariant
estimators 
\begin{equation} \label{form2}
 \delta_{\phi}= \{1-\phi(W)\}\frac{S}{n}, \quad\text{where}\quad W=\frac{\|X\|^2}{S}.
\end{equation}
Explicitly, he found an improved estimator 
\begin{equation}
 \delta^{S}=\min \Bigl(\frac{S}{n}, 
\frac{\|X \|^{2}+S}{p+n} \Bigr) =\{1-\phi^S(W)\}\frac{S}{n},\quad
\text{where}\quad\phi^{S}(w)= \max \Bigl(  0, \frac{p-n w}{p+n} \Bigr).
\end{equation}
This improvement can be regarded as Stein's paradox \citep{Stein-1956} for the estimation of a normal variance. 

The Stein paradox has traditionally been studied extensively 
in the context of estimating a mean vector.
For the $X$ and $S$ introduced at the beginning, 
consider the problem of estimating $\theta$ under the loss function
\begin{equation}\label{loss.mean}
L(\delta,\theta)= \frac{\|\delta-\theta\|^2}{\sigma^2}.
\end{equation}
The James–Stein estimator \citep{James-Stein-1961} is given by 
$(1-\alpha S/\|X\|^2)X $, which dominates the usual estimator $X$
when $p\geq 3$ and $0<\alpha \leq 2(p-2)/(n+2)$.
It is also known that certain classes of generalized Bayes estimators based on 
shrinkage-type priors dominate $X$.
In particular, \cite{Maru-Straw-2005} showed that 
\begin{equation}\label{simple.mean}
 \hat{\theta}=\left(1-\frac{\alpha}{\alpha+1+\|X\|^2/S}\right)X
\end{equation}
is a generalized Bayes estimator and that it dominates $X$ 
when $p\geq 3$ and $0<\alpha \leq 2(p-2)/(n+2)$.
Furthermore, for variance estimation under the same prior,
\cite{Maru-Straw-2006} showed that the simple estimator
\begin{equation}\label{simple.variance}
 \hat{\sigma}^2 =\left(1-\frac{\alpha}{\alpha+1+\|X\|^2/S}\right)\frac{S}{n}
\end{equation}
is also a generalized Bayes estimator.
Building on \cite{Straw-1974a},
\cite{Maru-Straw-2006} also derived sufficient conditions for this estimator to dominate $S/n$; 
however, those conditions were not obtained in closed form.
In this paper, we revisit this problem.

The organization of this paper is as follows. 
In Section \ref{sec:Bayes}, 
we review the Bayesianity of these estimators, \eqref{simple.mean} for $\theta$
and \eqref{simple.variance} for $\sigma^2$. 
In Section \ref{sec:minimax}, we demonstrate that
\begin{equation}
 \hat{\sigma}^2 =\left(1-\frac{\alpha}{\alpha+1+\|X\|^2/S}\right)\frac{S}{n}
\end{equation}
dominates $S/n$ when
\begin{equation}
 0<\alpha\leq \frac{-(n+2)+\sqrt{(n+2)^2+16p}}{2n}.
\end{equation}

\section{Bayesianity}
\label{sec:Bayes}
Let $\eta=1/\sigma^2$. For the general joint prior $\pi(\theta,\eta)$,
 the Bayes estimator of $\theta$ for the scaled quadratic loss \eqref{loss.mean} is
\begin{align}\label{hat.theta}
\hat{\theta}& =\frac{\iint \eta\theta\{\eta^{p/2}\exp(-\eta\|x-\theta\|^2/2)\}\{s^{n/2-1}\eta^{n/2}\exp(-\eta s/2)\}\pi(\theta,\eta)\rd \theta\rd\eta}
{\iint \eta
\{\eta^{p/2}\exp(-\eta\|x-\theta\|^2/2)\}\{s^{n/2-1}\eta^{n/2}\exp(-\eta s/2)\}
\pi(\theta,\eta)\rd \theta\rd\eta}\\
&=x+\frac{\iint \eta(\theta-x)\eta^{p/2+n/2}\exp(-\eta\|x-\theta\|^2/2-\eta s/2)\pi(\theta,\eta)\rd \theta\rd\eta}
{\iint \eta\eta^{p/2+n/2}\exp(-\eta\|x-\theta\|^2/2-\eta s/2)\pi(\theta,\eta)\rd \theta\rd\eta}\\
&=x+\frac{\nabla_x m(x,s)}{-2(\partial/\partial s)m(x,s)}
\end{align}
where
\begin{equation}\label{mxs}
 m(x,s)=\iint \eta^{p/2+n/2}\exp(-\eta\|x-\theta\|^2/2-\eta s/2)\pi(\theta,\eta)\rd \theta\rd\eta.
\end{equation}
Similarly the Bayes estimator of $\sigma^2=1/\eta$ for the entropy loss \eqref{loss.mean} is
\begin{align}\label{hat.sigma.2}
 \hat{\sigma}^2&= \frac{\iint \{\eta^{p/2}\exp(-\eta\|x-\theta\|^2/2)\}\{s^{n/2-1}\eta^{n/2}\exp(-\eta s/2)\}\pi(\theta,\eta)\rd \theta\rd\eta}
{\iint \eta\{\eta^{p/2}\exp(-\eta\|x-\theta\|^2/2)\}\{s^{n/2-1}\eta^{n/2}\exp(-\eta s/2)\}
\pi(\theta,\eta)\rd \theta\rd\eta}\\
&=\frac{m(x,s)}{-2(\partial/\partial s)m(x,s)}.
\end{align}
Following \cite{Maru-Straw-2005,Maru-Straw-2006}, we adopt a specific prior.
Let the conditional distribution of $ \theta $ given $ \lambda $ and $ \eta$, 
for $ 0 < \lambda <1 $, be normal with mean vector $ 0 $ and covariance matrix 
$ \lambda^{-1}(1-\lambda) \eta^{-1} I_{p} $ and  
let the density functions of $ \lambda $ and $ \eta $ be proportional to 
$ \lambda^{a}(1-\lambda)^{n/2-1}I_{(0,1)}(\lambda) $ and 
$ \eta^a I_{(0,\infty)}(\eta) $, respectively.
The resulting improper prior density is
\begin{equation}\label{my.prior}
\pi(\theta,\eta)=\frac{1}{(2\pi)^{p/2}} \left( \frac{\eta \lambda}{1-\lambda} \right)^{p/2}  
\exp \left(-\frac{\lambda}{1-\lambda}\frac{\eta}{2}\| \theta \| ^{2} \right)  
 \eta^{a} \lambda^{a}(1-\lambda)^{n/2}I_{(0,1)}(\lambda)I_{(0,\infty)}(\eta) .
\end{equation}
By substituting this into \eqref{mxs} and using the identity
\begin{equation}
 \|x-\theta\|^2+\frac{\lambda}{1-\lambda}\|\theta\|^2
=\frac{1}{1-\lambda}\|\theta-(1-\lambda)x\|^2+\lambda \|x\|^2,
\end{equation}
we obtain 
\begin{align*}
&m(x,s)\\
&= \iint \eta^{p/2+n/2+a} 
\exp \left(-\eta\frac{\lambda\| x \| ^{2}+s}{2}\right) 
 \lambda^{p/2+a}(1-\lambda)^{n/2-1}\rd\eta \rd\lambda \\
& =  \Gamma(p/2+n/2+a+1)2^{p/2+n/2+a+1}
\int_0^1 \lambda^{p/2+a}(1-\lambda)^{n/2-1}(\lambda\|x\|^2+s)^{-p/2-n/2-a-1}
\rd\lambda.
\end{align*}
Furthermore, by a following identity,
which is given by the change of variables
$ t=(1+w)\lambda/(1+w\lambda)$
\begin{equation}
 \int_{0}^{1}\lambda^{\alpha}(1-\lambda)^{\beta}(1+w \lambda)^{-\gamma}\rd  \lambda
= \frac{1}{(w+1)^{\alpha+1}}\int_{0}^{1}t^{\alpha}(1-t)^{\beta}
\left\{1-\frac{tw}{w+1} \right\}^{-\alpha-\beta+\gamma-2}\rd t,
\end{equation}
we have
\begin{align}
 m(x,s) &=  \frac{\Gamma(p/2+n/2+a+1)2^{p/2+n/2+a+1}}{s^{p/2+n/2+a+1}}\frac{B(p/2+a+1,n/2)}{(1+\|x\|^2/s)^{p/2+a+1}}\\
 &=  \frac{c}{s^{n/2}(\|x\|^2+s)^{p/2+a+1}}, 
\end{align}
where $c=\Gamma(p/2+n/2+a+1)2^{p/2+n/2+a+1}B(p/2+a+1,n/2)$.
Note
\begin{align}\label{mxs.2}
 \nabla_x  m(x,s)&=2c(-p/2-a-1)s^{-n/2}(\|x\|^2+s)^{-p/2-a-2}x\\ 
&=\frac{2(-p/2-a-1)}{(\|x\|^2+s)}m(x,s)
\end{align}
and
\begin{align}\label{mxs.3}
 &\frac{\partial}{\partial s}m(x,s)\\
&=c(-p/2-a-1)s^{-n/2}(\|x\|^2+s)^{-p/2-a-2} -(n/2)s^{-n/2-1}(\|x\|^2+s)^{-p/2-a-1}\\
&=-\left(\frac{p/2+a+1}{\|x\|^2+s}+\frac{n/2}{s}\right)m(x,s).
\end{align}
Then, by \eqref{hat.theta}, \eqref{hat.sigma.2}, \eqref{mxs.2} and \eqref{mxs.3}, 
the Bayes estimators reduce to
\begin{align}\label{theta.sigma.2}
 \hat{\theta} &=\left(1-\frac{\alpha}{\alpha+1+\|X\|^2/S}\right)X,\qquad
 \hat{\sigma}^2 =\left(1-\frac{\alpha}{\alpha+1+\|X\|^2/S}\right)\frac{S}{n}\\
&\qquad \qquad \qquad\qquad \qquad \qquad\text{where}
\quad \alpha=\frac{p/2+a+1}{n/2} .
\end{align}

\section{Minimaxity}
\label{sec:minimax}
We now establish a closed-form minimaxity result for the estimator $\hat{\sigma}^2$
given by \eqref{theta.sigma.2}.
\begin{theorem} \label{newstrawl1}
The simple Bayes estimator
\begin{equation}\label{simple.variance.1}
 \hat{\sigma}^2 =\left(1-\frac{\alpha}{\alpha+1+\|X\|^2/S}\right)\frac{S}{n}
\end{equation}
dominates $S/n$ when
\begin{equation}\label{theorem.alpha}
 0<\alpha\leq \alpha\coloneq\frac{-(n+2)+\sqrt{(n+2)^2+16p}}{2n}.
\end{equation}
\end{theorem}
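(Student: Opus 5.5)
We reduce the comparison to an inequality between the shrinkage factor $\phi(w)=\alpha/(\alpha+1+w)$ and a function computable at $\theta=0$, and then verify that inequality in closed form. Take $\sigma^2=1$ by scale invariance. The entropy loss \eqref{loss.variance} gives, for $\delta_\phi=\{1-\phi(W)\}S/n$,
\begin{equation}
R(\delta_\phi)-R(S/n)=E\Bigl[-\phi(W)\frac{S}{n}-\log\{1-\phi(W)\}\Bigr].
\end{equation}
A pointwise Brewster--Zidek argument, conditioning on $W$, cannot work. Writing $u=1+w$, the conditional term at $\theta=0$ is $-\alpha(n+p)/\{nu(u+\alpha)\}+\log(1+\alpha/u)$. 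For large $u$ this behaves like $\alpha/u>0$, so $\phi$ hurts pointwise for large $w$. The gain for small $w$ therefore has to compensate, and I would use an integrated argument in the spirit of Kubokawa's IERD method (or equivalently the approach of \cite{Straw-1974a}).

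\emph{Step 1 (integral representation).} Since $\phi(\infty)=0$, write
\begin{equation}
\log\{1-\phi(W)\}=-\int_W^\infty \frac{\phi'(t)}{1-\phi(t)}\,\rd t,\qquad \phi(W)=-\int_W^\infty\phi'(t)\,\rd t.
\end{equation}
Substituting, the risk difference becomes
\begin{equation}
\int_0^\infty \phi'(t)\,E\Bigl[\Bigl\{\frac{S}{n}-\frac{1}{1-\phi(t)}\Bigr\}I(W\le t)\Bigr]\rd t .
\end{equation}
Because $\phi'<0$, domination follows if, for every $t>0$,
\begin{equation}
E\bigl[S\,I(W\le t)\bigr]\ \ge\ \frac{n}{1-\phi(t)}\,P(W\le t).
\end{equation}

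\emph{Step 2 (reduction to $\theta=0$).} Represent $\|X\|^2$ as a Poisson$(\|\theta\|^2/2)$ mixture $K$ of $\chi^2_{p+2K}$. Given $K=k$, set $T=S+\|X\|^2\sim\chi^2_{n+p+2k}$, which is independent of $W$. Then $E[S\mid W,K]=(n+p+2k)/(1+W)$. The ratio $E[S I(W\le t)\mid K=k]/P(W\le t\mid K=k)$ is nondecreasing in $k$, since the density of $W$ given $k$ has monotone likelihood ratio in $k$ and $1/(1+w)$ is decreasing. Hence the worst case is $k=0$, i.e.\ $\theta=0$. With $f_j(x)=x^{p/2-1}(1+x)^{-(n+p)/2-j}$, the sufficient condition becomes
\begin{equation}
D(t):=(n+p)\{1+t\}\int_0^t f_1 - n(\alpha+1+t)\int_0^t f_0\ \ge 0\quad\text{for all } t>0,
\end{equation}
where we used $1/(1-\phi(t))=(\alpha+1+t)/(1+t)$.

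\emph{Step 3 (the closed-form inequality; main obstacle).} We have $D(0)=0$, and integration by parts gives $\int_0^t f_1(x)(p-nx)\,\rd x=2t^{p/2}(1+t)^{-(n+p)/2}$. Using this,
\begin{equation}
D'(t)=f_0(t)\{p-n\alpha-(n-2)t\}.
\end{equation}
So $D$ increases and then decreases, and the crude test ``$D'\ge0$'' is insufficient. Moreover the bookkeeping above suggests that $D(\infty)<0$. If that holds, the Step 1 bound is too lossy as it stands, and I would refine it: keep the exact $\theta$-dependence through the factor $(n+p+2k)$ and exploit the weight $\phi'(t)$ by a second integration by parts in $t$ rather than requiring the bracket to be nonnegative for each $t$.

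The target form of the final condition is a quadratic inequality in $\alpha$,
\begin{equation}
n\alpha^2+(n+2)\alpha-\frac{4p}{n}\le 0,
\end{equation}
whose positive root is exactly the bound in \eqref{theorem.alpha}. I would look for an intermediate step of the form $E[\,\cdot\,]\ge$ (positive mixture) $\times$ (quadratic in $\alpha$), derived by applying the Step 3 integration-by-parts identity to $\phi$ and $\phi^2$ terms; these arise from a second-order expansion of $-\log(1-\phi)\le\phi+\phi^2/\{2(1-\phi)\}$. Pinning down that expansion sharply enough to recover the exact constant $16p$ is the step I expect to be hardest.
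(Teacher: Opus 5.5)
\textbf{There is a genuine gap for $n\ge 3$.} Your Steps 1--3 cannot prove the theorem in that case, and the problem is not that the bound is merely lossy. From your own formula for $D'$ and $D(0)=0$,
\[
D(\infty)=\int_0^\infty f_0(s)\{p-n\alpha-(n-2)s\}\,\rd s=(p-n\alpha)B(p/2,n/2)-(n-2)B(p/2+1,n/2-1)=-n\alpha B(p/2,n/2)<0
\]
for every $\alpha>0$. So the sufficient condition of Step 1 is violated at $\theta=0$ whatever $\alpha$ is. The reason is structural. At $\theta=0$ that condition says $\phi(t)\le 1-nP_0(W\le t)/E_0[S\,I(W\le t)]$, i.e.\ $\phi$ must lie below the Brewster--Zidek shrinkage. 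By your integration-by-parts identity that shrinkage decays like $t^{-n/2}$, while $\phi(t)\sim\alpha/t$. No further integration by parts inside the IERD framework changes this.

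For $n\le 2$ the picture is different: $D'\ge 0$ on $(0,\infty)$ whenever $\alpha\le p/n$, a range containing the theorem's, so there your route does work given a correct Step 2. The justification you give for Step 2 is backwards, though. Monotone likelihood ratio plus decreasing $1/(1+w)$ makes $E_k[(1+W)^{-1}\mid W\le t]$ \emph{decrease} in $k$. The ratio increases only because the factor $n+p+2k$ wins, and that needs a computation: with $\tau=t/(1+t)$ the ratio equals $n+2(1-\tau)^{n/2}/\int_0^1u^{p/2+k-1}(1-\tau u)^{n/2-1}\,\rd u$. Your closing paragraph is only a wish list, so for $n\ge3$ the proposal contains no proof.

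\textbf{The missing idea} is to bound $\Delta=R(\theta,\sigma^2,S/n)-R(\theta,\sigma^2,\hat{\sigma}^2)$ directly, without IERD. Use your second-order bound $\log(1-x)\ge -x-x^2/\{2(1-x)\}$ and observe that $x/(1-x)=\alpha/(1+w)$ when $x=\phi(w)$. Combined with your identity $E[S\mid W,K=k]=(n+p+2k)/(1+W)$ (with $\sigma^2=1$), this gives
\[
\Delta\ge\frac{\alpha}{n}E^K\Bigl[E\Bigl\{\frac{p+n+2k-\alpha n/2}{(1+W_k)(\alpha+1+W_k)}-\frac{n}{\alpha+1+W_k}\;\Big|\;K=k\Bigr\}\Bigr],
\]
where $W_k$ has the law of $W$ given $K=k$. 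The integrand changes sign. The decisive step, Strawderman's device, is to write it as $h(w)g_k(w)$ with:
\begin{itemize}
\item $h(w)=(1+w)^{\epsilon}/(\alpha+1+w)$ and $\epsilon=1/(1+\alpha)$, the largest exponent making $h$ nonincreasing;
\item $g_k(w)=(1+w)^{-\epsilon}\{(p+n+2k-\alpha n/2)/(1+w)-n\}$, which changes sign once, from positive ($g_k(0)=p+2k-\alpha n/2>0$) to negative.
\end{itemize}
Then $E[h(W_k)g_k(W_k)]\ge h(w_k^*)E[g_k(W_k)]$, where $w_k^*$ is the sign-change point. The expectation $E[g_k(W_k)]$ is an explicit combination of $B(p/2+k,n/2+\epsilon)$ and $B(p/2+k,n/2+\epsilon+1)$. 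It is bounded below, uniformly in $k$ with worst case $k=0$, by a positive multiple of $4p\epsilon/\{n(n+2\epsilon)\}-\alpha$. With $\epsilon=1/(1+\alpha)$, nonnegativity of this quantity is exactly $n\alpha^2+(n+2)\alpha-4p/n\le 0$.

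Your log expansion and target quadratic are the right ingredients. Without this monotone-factor step and the specific choice of $\epsilon$, however, there is no path from the risk difference to the closed-form bound.
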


\begin{proof}
Let $\alpha_*$ be as in \eqref{theorem.alpha}, which can also be expressed as
\begin{equation}
\alpha_*
=\max_{\kappa\in(0,1)}\min
\left(\frac{1}{\kappa}-1,\frac{4p\kappa}{n(n+2\kappa)}\right).
\end{equation}
Write $W=\|X\|^2/S$ and $ V=S/\sigma^{2}$. 
The difference in risks between 
$ S/n$ and $ \hat{\sigma}^2$ equals
\begin{align} 
 \Delta&=R(\theta,\sigma^{2},S/n)- R(\theta,\sigma^{2},\hat{\sigma}^2)\\
&=E \left[ \left\{\frac{S}{n\sigma^2}-\log\left(\frac{S}{n\sigma^2}\right)-1\right\}\right. \\
&\qquad \left. -\left\{ \left( 1-\frac{\alpha}{\alpha+1+W}\right)\frac{S}{n\sigma^2}-
\log\left(\left( 1-\frac{\alpha}{\alpha+1+W}\right)\frac{S}{n\sigma^2}\right)-1\right\}\right] \\
&=\frac{\alpha}{n}E \left[ \frac{V}{\alpha+1+W}\right]+
E \left[\log \left( 1-\frac{\alpha}{\alpha+1+W} \right)\right],
\end{align} 
In the following we will show $\Delta\geq 0$ for $\alpha\in(0,\alpha]$.

Let $g(w)=1/(\alpha+1+W)$.
Using a Poisson mixture representation with $J\sim \mathrm{Po}(\| \theta \|^{2}/(2\sigma^{2}) )$
and $U_j\sim\chi^2_{p+2j}$, we have  
\begin{align}
 E \left[ Vg(W)\right]
&= E^J\left[ E\left[ Vg(U_j/V)\mid J=j\right]\right]
= E^J\left[ E\left[ (U_i+V)\frac{g(U_j/V)}{1+U_j/V}\mid J=j\right]\right]\\
&=E^J\left[ E\left[U_i+V\mid J=j\right]
E\left[\frac{g(U_j/V)}{1+U_j/V}\mid J=j\right]\right]\\
&=E^J\left[ (p+n+2j)E\left[\frac{g(U_j/V)}{1+U_j/V}\mid J=j\right]\right],
\end{align}
where the third equality follows from the independence of $U_i+V$ and $U_i/V$.
Further, we have
\begin{equation}\label{log.lower} 
 \log(1-x)=-\sum_{i=1}^{\infty}\frac{x^{i}}{i} \geq -x 
-\frac{1}{2}\frac{x}{1-x}x,
\end{equation}
and hence
\begin{equation}\label{log.lower.1} 
\log \left( 1-\frac{\alpha}{\alpha+1+w} \right)\geq 
-\frac{\alpha}{\alpha+1+w}-\frac{1}{2}\frac{\alpha}{1+w}\frac{\alpha}{\alpha+1+w}.
\end{equation}
For $W_j=U_j/V$, we have
\begin{align}\label{Delta.1}
 \Delta&\geq
\frac{\alpha}{n}E^J\left[ 
E\left[\frac{p+n+2j}{(\alpha+1+W_j)(1+W_j)}-
\frac{n}{\alpha+1+W_j}-\frac{1}{1+W_j}\frac{n/2}{\alpha+1+W_j}\mid J=j\right]\right]\\
&=\frac{\alpha}{n}E^J\left[ 
E\left[\frac{(1+W_j)^\epsilon}{\alpha+1+W_j}k_j(W_j)\mid J=j\right]\right],
\end{align}
where 
\begin{equation}\label{eps.alp}
 \epsilon=\frac{1}{1+\alpha}
\end{equation}
and 
\begin{align}
 k_j(w_j)&=\frac{p+n+2j}{(1+w_j)^{\epsilon+1}}-
\frac{n}{(1+w_j)^\epsilon}-\frac{\alpha n/2}{(1+w_j)^{1+\epsilon}}\\
&=\frac{1}{(1+w_j)^\epsilon}\left(
\frac{p+n+2j}{1+w_j}-n-\frac{\alpha n/2}{1+w_j}\right).
\end{align}
The function $ (1+w)^\epsilon/(\alpha+1+w)$ in \eqref{Delta.1} is monotone non-increasing since
\begin{align}\label{monotone}
\frac{\rd}{\rd w}\log \frac{(1+w)^\epsilon}{\alpha+1+w}
&=\frac{\epsilon}{1+w}-\frac{1}{\alpha+1+w}
=\frac{\epsilon(\alpha+1)-1+w(\epsilon-1)}{(1+w)(\alpha+1+w)}\\
&=-\frac{w(1-\epsilon)}{(1+w)(\alpha+1+w)}\\ &\leq 0.
\end{align}
The value $k_j(w_j)$ at $w_j=0$ is
\begin{align}
 k_j(0)&=p+2j-\alpha\frac{n}{2}\\ &\geq p-\alpha_*\frac{n}{2}\\
&= p -\frac{-(n+2)+\sqrt{(n+2)^2+16p}}{4}\\
&=\frac{4p+n+2-\sqrt{(n+2)^2+16p}}{4},
\end{align}
which is positive. Hence 
$k_j(w_j)$ changes sign only once from positive to negative on $ (0,\infty) $.
Let $w_j^*$ satisfy $k_j(w_j^*)=0$.
Then we have
\begin{equation}\label{monotone.1}
 E\left[
\frac{(1+W_j)^\epsilon}{\alpha+1+W_j}k_j(W_j)\mid J=j\right]
\geq \frac{(1+w_j^*)^\epsilon}{\alpha+1+w_j^*}
 E\left[k_j(W_j)\mid J=j\right].
\end{equation}
In the above, we have
\begin{align}
 & E\left[k_j(W_j)\mid J=j\right]\label{Delta.2}\\
&=
(p+n+2j)B(p/2+j,n/2+\epsilon+1)-nB(p/2+j,n/2+\epsilon)-
(\alpha n/2)B(p/2+j,n/2+\epsilon+1)\notag\\
&=B(p/2+j,n/2+\epsilon)
\left\{(p+n+2j)\frac{n/2+\epsilon}{p/2+j+n/2+\epsilon}
-n-\frac{\alpha n}{2}\frac{n/2+\epsilon}{p/2+j+n/2+\epsilon}
\right\}\notag\\
&\geq B(p/2+j,n/2+\epsilon)
\left\{(p+n)\frac{n+2\epsilon}{p+n+2\epsilon}
-n-\frac{\alpha n}{2}\frac{n+2\epsilon}{p+n+2\epsilon}
\right\}\notag\\
&= \frac{B(p/2+j,n/2+\epsilon)}{p+n+2\epsilon}
\left\{(p+n)(n+2\epsilon)-n(p+n+2\epsilon)- \frac{\alpha n}{2}(n+2\epsilon)\right\}\notag\\
&= \frac{n(n+2\epsilon)B(p/2+j,n/2+\epsilon)}{2(p+n+2\epsilon)}
\left\{\frac{4p\epsilon}{n(n+2\epsilon)}- \alpha \right\}.\notag
\end{align}
Finally, by \eqref{eps.alp}, we have
\begin{align}
 \frac{4p\epsilon}{n(n+2\epsilon)}- \alpha
&=\frac{4p/n}{n(\alpha+1)+2}-\alpha 
=\frac{4p/n-\alpha n(\alpha+1)+2}{n(\alpha+1)+2} \label{Delta.3}\\
&\geq \frac{4p/n-\alpha_* n(\alpha_*+1)+2}{n(\alpha+1)+2}\\ &=0,
\end{align}
where the inequality follows from $\alpha\leq \alpha_*$.
Therefore, by \eqref{Delta.1}, \eqref{monotone.1}, \eqref{Delta.2}, and \eqref{Delta.3}, we 
establish $\Delta\geq 0$ for $0<\alpha\leq \alpha_*$.
\end{proof}
\begin{remark}
The core methodology of the proof presented above relies on the inequality in \eqref{log.lower} 
and the monotonicity of $(1+w)^\epsilon / (\alpha+1+w)$, 
as applied in \eqref{monotone} and \eqref{monotone.1}. 
These techniques originate from the work of \cite{Maru-Straw-2006} 
and \cite{Straw-1974a}, respectively. 
However, those earlier studies focused on establishing broad sufficient conditions 
for minimaxity that could be applied across a wide class of estimators. 
Consequently, the specific structural advantages of individual estimators were not fully exploited.
In contrast, for the estimator \eqref{simple.variance.1} analyzed in this paper, 
we leverage its unique form-specifically, 
the fact that the term $x/(1-x)$ in  \eqref{log.lower} 
simplifies to $\alpha/(w+1)$ in \eqref{log.lower.1}. 
This specific exploitation of the estimator's structure provides a substantial advantage, 
allowing for the derivation of the closed-form minimaxity result presented here.
\end{remark}

\end{document}